\theoremstyle{plain}
\newtheorem{theorem}{Theorem}[section]
\newtheorem{proposition}[theorem]{Proposition}
\theoremstyle{definition}
\newtheorem{definition}[theorem]{Definition}
\theoremstyle{remark}
\newtheorem{remark}[theorem]{Remark}
\newcommand{\CC}{\mathbb{C}}
\newcommand{\qx}{\overline{x}}
\newcommand{\XX}{\mathfrak{X}}
\newcommand{\uphi}{d^{\phi}}
\newcommand{\dphi}{d_\phi}
\newcommand{\dpi}{d_\pi}
\begin{document}

\title{Extensions of Poisson Structures on Singular Hypersurfaces}
\author{Aaron McMillan Fraenkel}

\begin{abstract}
Fix a codimension-1 affine Poisson variety $(X,\pi_X)\subseteq\CC^n$ with an isolated singularity at the origin.  We characterize possible extensions of $\pi_X$ to $\CC^n$ using the Koszul complex of the Jacobian ideal of $X$.  In the particular case of a singular surface, we show that there always exists an extension of $\pi_X$ to $\CC^3$.
\end{abstract}

\maketitle

\tableofcontents

\section{Introduction}
\label{sec:intro}

Given a singular affine Poisson variety $(X,\pi_X)\subseteq\CC^n$, one may ask when it is possible to find a Poisson structure on $\CC^n$ extending the Poisson structure $(X,\pi_X)$.  This question has an affirmative answer in a number of classical examples (e.g. ADE-singularities and the case of symplectic quotients with quadratic invariants, see \cite{Lerman93, AF}), while work done in \cite{Davis02,Egilsson95,AF} provide a number of examples for which no such extension exists.  However, little progress has been made in understanding why these examples have different extension properties; the size and (computational) intractability of these negative examples obstructs a thorough understanding of this non-extension phenomenon.   This note comes from an attempt to produce a simpler, lower-dimensional example of this non-extension phenomenon.

The results of this note build from the simple observation that, for a hypersurface $V(\phi)\subseteq\CC^n$, the associated Koszul complex of the jacobian ideal $J_\phi$ can be realized as $(\XX^\bullet,\dphi)$ on the vector space $\XX^\bullet$  of multi-derivations of $\CC[x_1,\ldots,x_n]$.  This complex can then be used in conjunction with the standard Gerstenhaber algebra structure on $\XX^\bullet$ to derive conditions on possible extensions of a Poisson bracket on $V(\phi)$.  That is:

{
\renewcommand{\thetheorem}{\ref{thm:main}}
\begin{theorem}
 Suppose $\beta\in \XX^2$ is an extension of $(X,\pi_X)$ to $\CC^n$ as a bi-derivation, where $X=V(\phi)$ has only isolated singularities.  Then there exists $X_2\in\XX^2$ and $X_3\in\XX^3$ such that $$\beta = \dphi X_3 +\phi X_2,$$  where $X_3$ satisfies the condition
$$[X_3,\dphi X_3] = \dphi Y_5 +\phi Y_4, \mbox{ where }  Y_4\in\XX^4 \mbox{ and } Y_5\in\XX^5.$$
Furthermore, $\dphi X_3 +\phi \tilde{X}_2$ is also an extension of $\pi_X$ for any choice of $\tilde{X}_2\in\XX^2$.
\end{theorem}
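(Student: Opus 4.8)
The plan is to convert the two defining properties of an extension into homological statements about the Koszul complex $(\XX^\bullet,\dphi)$ and then exploit two structural facts. First, for an isolated singularity the partials $\partial_1\phi,\dots,\partial_n\phi$ form a regular sequence, so $(\XX^\bullet,\dphi)$ is exact in every positive degree. Second, the Koszul differential is, up to sign, the Schouten bracket with $\phi$, namely $\dphi=[\phi,\cdot]$; consequently $\dphi$ is a derivation of the Gerstenhaber bracket and $\dphi(\phi P)=\phi\,\dphi P$ for every $P$. Being an extension as a bi-derivation packages into two conditions: (I) $(\phi)$ is a Poisson ideal, i.e. $\dphi\beta\in\phi\XX^1$; and (II) the induced bracket on $\CC[x]/(\phi)$ is $\pi_X$ and satisfies Jacobi, which, evaluating on coordinate functions, is equivalent to $[\beta,\beta]\in\phi\XX^3$. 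These are the only inputs I will use.

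For the decomposition I would write $\dphi\beta=\phi V$ with $V\in\XX^1$ by (I). Applying $\dphi$ and using $\dphi^2=0$ together with $\dphi(\phi V)=\phi\,\dphi V$ gives $\phi\,\dphi V=0$, hence $\dphi V=0$ since $\CC[x]$ is a domain. Exactness in degree $1$ then produces $X_2\in\XX^2$ with $\dphi X_2=V$, and a direct computation gives $\dphi(\beta-\phi X_2)=\phi V-\phi\,\dphi X_2=0$. Exactness in degree $2$ now yields $X_3\in\XX^3$ with $\beta-\phi X_2=\dphi X_3$, that is $\beta=\dphi X_3+\phi X_2$, the asserted form.

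For the condition on $X_3$ I first expand $[\beta,\beta]$ using the biderivation property of the Schouten bracket. The cross term $[\dphi X_3,\phi X_2]$ contributes $[\dphi X_3,\phi]\wedge X_2+\phi[\dphi X_3,X_2]$, whose first summand vanishes because $[\dphi X_3,\phi]=\pm\dphi^2 X_3=0$; likewise every term of $[\phi X_2,\phi X_2]$ carries a factor of $\phi$. Hence $[\beta,\beta]\equiv[\dphi X_3,\dphi X_3]\pmod{\phi\XX^3}$, and (II) forces $[\dphi X_3,\dphi X_3]=\phi W$ for some $W\in\XX^3$. The derivation property yields the identity $\dphi[X_3,\dphi X_3]=[\dphi X_3,\dphi X_3]$, the other term being $\pm[X_3,\dphi^2X_3]=0$, so $\phi W=\dphi[X_3,\dphi X_3]$. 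Applying $\dphi$ once more shows $\phi\,\dphi W=0$, whence $\dphi W=0$ and, by exactness in degree $3$, $W=\dphi Y_4$ for some $Y_4\in\XX^4$. Consequently $\dphi([X_3,\dphi X_3]-\phi Y_4)=\phi W-\phi\,\dphi Y_4=0$, and exactness in degree $4$ delivers $Y_5\in\XX^5$ with $[X_3,\dphi X_3]-\phi Y_4=\dphi Y_5$, i.e. $[X_3,\dphi X_3]=\dphi Y_5+\phi Y_4$.

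Finally, for the ``furthermore'' clause I set $\tilde\beta=\dphi X_3+\phi\tilde X_2$. Since $\tilde\beta\equiv\dphi X_3\equiv\beta\pmod\phi$, the bivector $\tilde\beta$ has the same coefficients modulo $(\phi)$ as $\beta$, so it induces the same bracket on $X$ once $(\phi)$ is checked to be a Poisson ideal; indeed $\dphi\tilde\beta=\dphi^2X_3+\phi\,\dphi\tilde X_2=\phi\,\dphi\tilde X_2\in\phi\XX^1$. For the Jacobi identity the expansion above applies verbatim with $\tilde X_2$ in place of $X_2$, giving $[\tilde\beta,\tilde\beta]\equiv[\dphi X_3,\dphi X_3]=\phi W\pmod{\phi\XX^3}$, so $[\tilde\beta,\tilde\beta]\in\phi\XX^3$; crucially $W$ depends only on $X_3$, not on the chosen complement, so no new hypothesis is needed. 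Thus $\tilde\beta$ restricts to $\pi_X$ and is Poisson on $X$, hence is again an extension. The main obstacle throughout is the bookkeeping that keeps each error term inside $\phi\XX^\bullet$ so that exactness can be invoked; the single load-bearing step is showing $W$ is $\dphi$-closed, which rests on the identity $\dphi[X_3,\dphi X_3]=[\dphi X_3,\dphi X_3]$ and on $\phi$ being a nonzerodivisor. When $n$ is small the relevant top-degree groups vanish and the corresponding $Y_i$ are simply $0$, leaving the argument unaffected.
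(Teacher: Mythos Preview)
Your argument is correct and follows essentially the same route as the paper. The decomposition $\beta=\dphi X_3+\phi X_2$ is obtained exactly as in the paper by twice invoking exactness of the Koszul complex, and your derivation of the condition on $X_3$ is precisely the step the paper summarizes as ``proved analogously'': you expand $[\beta,\beta]$, isolate $\dphi[X_3,\dphi X_3]$ as $\langle\phi\rangle$-valued, and then repeat the two-step exactness argument in degrees $3$ and $4$. Your verification of the ``furthermore'' clause is slightly more than needed, since an extension \emph{as a bi-derivation} only requires $\dphi\tilde\beta\in\phi\XX^1$ and $\tilde\beta\equiv\beta\pmod\phi$; the Jacobi check you include is automatic from $\pi_X$ being Poisson, but it does no harm.
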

\addtocounter{theorem}{-1}
}

When one considers surfaces $V(\phi)\subseteq\CC^3$, the theorem above implies that Poisson brackets are {\it always} extendable:

{
\renewcommand{\thetheorem}{\ref{thm:dim3}}
\begin{theorem}
Suppose $(X=V(\phi),\pi_X)\subseteq\CC^3$ is a 2-dimensional Poisson variety with only isolated singularities.  Then, the Poisson bracket $\pi_X$ extends to a Poisson bracket $\beta$ on $\CC^3$.  Furthermore, $\beta$ has the form:
$$\beta = f\frac{\partial \phi}{\partial x_3}\frac{\partial}{\partial x_1}\wedge\frac{\partial}{\partial x_2} +
		f\frac{\partial \phi}{\partial x_2}\frac{\partial}{\partial x_3}\wedge\frac{\partial}{\partial x_1}+
		f\frac{\partial \phi}{\partial x_1}\frac{\partial}{\partial x_2}\wedge\frac{\partial}{\partial x_3}$$
where $f\in\CC[x_1,x_2,x_3]$.
\end{theorem}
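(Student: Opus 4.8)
The plan is to produce the extension explicitly, as a bivector of the advertised form $\beta = f\Pi$ where $\Pi := \dphi(\partial_1\wedge\partial_2\wedge\partial_3)$ is the ``Jacobian'' bivector of $\phi$ on $\CC^3$ --- i.e. $\Pi$ is the bivector displayed in the statement, with $f=1$ --- and then to choose the polynomial $f$ so that $\beta$ restricts to the prescribed $\pi_X$. The key preliminary observations concern the whole family $\{f\Pi : f\in\CC[x_1,x_2,x_3]\}$. Since $\dphi$ is $\CC[x_1,x_2,x_3]$-linear we have $f\Pi = \dphi(f\,\partial_1\wedge\partial_2\wedge\partial_3)$, and since $\dphi^2 = 0$ we get $\iota_{d\phi}(f\Pi) = 0$; hence $\{\,\cdot\,,\phi\}_{f\Pi}\equiv 0$ and $f\Pi$ descends to a biderivation $(f\Pi)|_X$ of $\CC[X] = \CC[x_1,x_2,x_3]/(\phi)$ having $\phi$ as a Casimir. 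Moreover $f\Pi$ is automatically a \emph{Poisson} bivector on $\CC^3$: contraction with the standard volume form is a $\CC[x_1,x_2,x_3]$-linear isomorphism from bivector fields to $1$-forms on $\CC^3$, carrying $f\Pi$ to $\pm f\,d\phi$, and in dimension three a bivector $\pi$ is Poisson exactly when its dual $1$-form $\alpha$ satisfies $\alpha\wedge d\alpha = 0$; here $f\,d\phi\wedge d(f\,d\phi) = f\,d\phi\wedge df\wedge d\phi = 0$. (Equivalently, these are the Nambu--Jacobi brackets $\{g,h\} = f\det(\nabla\phi,\nabla g,\nabla h)$, which are Poisson because $\phi$ is always a Casimir and conformal rescaling does not disturb Jacobi once the symplectic leaves lie in the level sets of $\phi$.) Thus every $f\Pi$ is a Poisson extension of $(X,(f\Pi)|_X)$, and everything reduces to showing that the assignment $f\mapsto(f\Pi)|_X$ hits $\pi_X$.

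The crux is that $\Pi|_X$ generates $\XX^2(X)$ as a $\CC[X]$-module, i.e. $\XX^2(X) = \CC[X]\cdot\Pi|_X$. On the smooth locus $X^{\mathrm{sm}}$ the sheaf $\wedge^2 TX^{\mathrm{sm}}$ is a line bundle; at $p\in X^{\mathrm{sm}}$ one has $d\phi(p)\neq 0$, $T_pX = \ker d\phi(p)$, and $\Pi(p) = \iota_{d\phi(p)}(\partial_1\wedge\partial_2\wedge\partial_3)$ is a nonzero decomposable bivector killed by $d\phi(p)$, hence lies in $\wedge^2(\ker d\phi(p)) = \wedge^2 T_pX$ and is nonzero there. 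So $\Pi|_{X^{\mathrm{sm}}}$ trivializes $\wedge^2 TX^{\mathrm{sm}}$ and $\XX^2(X^{\mathrm{sm}}) = \CC[X^{\mathrm{sm}}]\cdot\Pi|_{X^{\mathrm{sm}}}$. Here is where the hypotheses enter: $X$ is a hypersurface, hence Cohen--Macaulay, so it satisfies Serre's $(S_2)$; since $\dim X = 2$ and $\mathrm{Sing}(X)$ is finite, $X$ satisfies $(R_1)$; therefore $X$ is normal and $\CC[X] = \Gamma(X^{\mathrm{sm}},\mathcal O)$, regular functions extending across $\mathrm{Sing}(X)$. (If $\phi$ is reducible, normality forces the components of $X$ to be pairwise disjoint, and one runs the argument on each.) Consequently, given any $\pi_X\in\XX^2(X)$, its restriction to $X^{\mathrm{sm}}$ equals $f_0\,\Pi|_{X^{\mathrm{sm}}}$ for a unique $f_0\in\CC[X^{\mathrm{sm}}] = \CC[X]$.

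To finish, lift $f_0$ to $f\in\CC[x_1,x_2,x_3]$ and put $\beta := f\Pi$. By the first paragraph $\beta$ is Poisson on $\CC^3$ and descends to a biderivation of $\CC[X]$ which agrees with $\pi_X$ on the dense open $X^{\mathrm{sm}}$; since $\CC[X]$ is reduced, a biderivation is determined by its restriction to a dense open subset, so $(f\Pi)|_X = \pi_X$. This $\beta$ is the required extension and has the stated shape. (The hypothesis that $\pi_X$ be Poisson is in fact automatic, as $[\pi_X,\pi_X]\in\XX^3(X)$ vanishes on $X^{\mathrm{sm}}$, being a section of $\wedge^3$ of a rank-$2$ bundle; and in the language of Theorem~\ref{thm:main} the output corresponds to $X_3 = f\,\partial_1\wedge\partial_2\wedge\partial_3$, $X_2 = 0$.)

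The one genuinely delicate point, as I see it, is the extension of the conformal factor $f_0$ from $X^{\mathrm{sm}}$ to all of $X$: the ratio $\pi_X/\Pi$ is evidently a regular function on $X^{\mathrm{sm}}$, and one needs it to remain regular across the singular points --- which is exactly the content of the normality of $X$ and the sole place the ``isolated singularities'' hypothesis is used. The remainder is formal, and it is the dimension-three coincidence (bivectors $\leftrightarrow$ $1$-forms, with the Jacobi identity becoming the integrability condition $\alpha\wedge d\alpha = 0$, together with the triviality of $\wedge^2 TX^{\mathrm{sm}}$) that makes the extension possible without any obstruction.
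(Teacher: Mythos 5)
Your proposal is correct, but it takes a genuinely different route from the paper's. The paper first extends $\pi_X$ arbitrarily to a bi-derivation $\beta$ on $\CC^3$ (Proposition \ref{prop:ext}), then invokes Theorem \ref{thm:main} --- that is, exactness of the Koszul complex $(\XX^\bullet,\dphi)$, which is where the isolated-singularity hypothesis enters, through the partials of $\phi$ forming a regular sequence --- to write $\beta=\dphi X_3+\phi X_2$; dropping the $\phi X_2$ term still gives an extension, and $[\dphi X_3,\dphi X_3]=\dphi([X_3,\dphi X_3])=0$ because $\XX^4=\{0\}$ in dimension three, so the Jacobi identity is automatic and the displayed form follows from $X_3=f\,\partial_1\wedge\partial_2\wedge\partial_3$. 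You instead work intrinsically on $X$: you observe that $\Pi=\dphi(\partial_1\wedge\partial_2\wedge\partial_3)$ trivializes $\wedge^2 TX^{\mathrm{sm}}$, use Serre's criterion ($S_2$ from the hypersurface condition, $R_1$ from isolated singularities) together with algebraic Hartogs to conclude that the conformal factor $\pi_X/\Pi$ extends to a regular function on all of $X$, lift it to $f$, and verify directly (via $\nu^\flat(f\Pi)=\pm f\,d\phi$ and the integrability criterion $\alpha\wedge d\alpha=0$, equivalently the Nambu/Jacobian bracket) that $f\Pi$ is Poisson on $\CC^3$; so the singularity hypothesis is consumed through normality of $X$ rather than through Koszul exactness on the ambient space, and you never need the a priori bi-derivation extension of Proposition \ref{prop:ext} or Theorem \ref{thm:main} at all. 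The paper's route buys uniformity in $n$ (Theorem \ref{thm:main} is the general statement, of which $n=3$ is an easy corollary), while your route buys a sharper structural fact --- every skew bi-derivation of $A(X)$ is of the form $f_0\,\Pi|_X$, so in particular the Jacobi identity on $X$ itself is automatic --- and makes completely transparent where isolated singularities are used (regularity of $f_0$ across the singular points). The steps you treat briefly (identifying bi-derivations over $X^{\mathrm{sm}}$ with sections of $\wedge^2 TX^{\mathrm{sm}}$, and the density argument giving $(f\Pi)|_X=\pi_X$) are standard for reduced, irreducible $X$; since the paper assumes $X$ irreducible throughout, your parenthetical about reducible $\phi$ is not even needed.
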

\addtocounter{theorem}{-1}
}

Poisson structures of this form are very special and have been extensively studied (see, for example, \cite{Pichereau06,Laurent13}).


\section{Basics on Extensions of Poisson Structures}
\label{sec:basics}

We refer the reader to \cite{Laurent13,Vanhaecke01} for background on algebraic Poisson varieties.  Throughout, $(X,\pi_X)$ is an irreducible affine algebraic Poisson variety embedded in $\CC^n$.  Therefore, as a variety, $X$ is the vanishing locus $X=V(I)$ of a prime ideal $I\subseteq\CC[x_1,\ldots,x_n]$.  Additionally, the algebra of functions $A(X):=\CC[x_1,\ldots,x_n]/I$ on $X$ is equipped with a Poisson bracket $\pi_X$.  That is, $\pi_X\in {\rm Hom}(\wedge^2 A(X),\CC)$ is a skew-symmetric bi-derivation satisfying the Jacobi identity.

\begin{definition} An extension of the Poisson bracket $\pi_X$ on $X$ to $\CC^n$ is a Poisson bracket $\pi$ on $\CC[x_1,\ldots,x_n]$ such that the projection map $$\CC[x_1,\ldots,x_n]\twoheadrightarrow\CC[x_1,\ldots,x_n]/I$$ is a homomorphism of Poisson algebras.  
\end{definition}

In practice, constructing an extension of $(X,\pi_X)$ is often done in two steps: (1) extend $\pi_X$ as a bi-derivation to a bi-derivation $\pi$ on $\CC^n$ that doesn't necessarily satisfy the Jacobi identity, (2) determine which of these brackets satisfies the Jacobi identity.

Step one is always possible and results in the following description (see \cite{Laurent13} for details):
\begin{proposition}
\label{prop:ext}
Denote the images of $x_1,\ldots,x_n$ in $A(X)$ by $\qx_1,\ldots,\qx_n$. Let $\qx_{ij}:=\pi_X(\qx_i,\qx_j)\in A(X)$ and choose representatives $x_{ij}\in\CC[x_1,\ldots,x_n]$ of $\qx_{ij}$.  The choice of representatives defines an extension of $\pi_X$ to a skew-symmetric bi-derivation $\pi$ of $\CC[x_1,\ldots,x_n]$ given by 
$$\pi(x_i,x_j)  = x_{ij}.$$
The extension $\pi$ satisfies

\begin{align}
\pi(f, I ) \in I,   \label{eqn:ham} \\
\pi(f,\pi(g,h))+\pi(g,\pi(h,f))+\pi(h,\pi(f,g))\in I, \label{eqn:jac}
\end{align}
for all $f,g,h\in\CC[x_1,\ldots,x_n]$.  Moreover, any extension of $\pi_X$ comes from such a choice of representatives.  
\end{proposition}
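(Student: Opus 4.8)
The plan is to verify each assertion in the order in which the construction is made: first that the formula $\pi(x_i,x_j) = x_{ij}$ genuinely defines a skew-symmetric bi-derivation on $\CC[x_1,\ldots,x_n]$ extending $\pi_X$, then that it satisfies the two ideal-membership conditions \eqref{eqn:ham} and \eqref{eqn:jac}, and finally that every extension arises this way. For the first point I would recall the standard fact (citing \cite{Laurent13}) that on a polynomial algebra a skew-symmetric bi-derivation is freely and uniquely determined by its values on the generators $x_i$: one sets $\pi(f,g) = \sum_{i<j} x_{ij}\bigl(\tfrac{\partial f}{\partial x_i}\tfrac{\partial g}{\partial x_j} - \tfrac{\partial f}{\partial x_j}\tfrac{\partial g}{\partial x_i}\bigr)$ and checks Leibniz in each slot directly from the Leibniz rule for $\partial/\partial x_i$. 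That this $\pi$ is an \emph{extension as a bi-derivation} of $\pi_X$ is essentially the statement that the diagram defining $\pi$ reduces mod $I$ to $\pi_X$; this needs the compatibility that will come out of \eqref{eqn:ham}.

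Next I would establish \eqref{eqn:ham}, i.e. $\pi(f,I)\subseteq I$ for all $f$. Because $\pi(f,\cdot)$ is a derivation and $I$ is finitely generated, it suffices to show $\pi(x_i, g)\in I$ whenever $g\in I$, and by the Leibniz rule again it suffices to check this on a generating set $g_1,\ldots,g_r$ of $I$. Here is where the hypothesis that $\pi_X$ is a well-defined bracket on $A(X)$ enters: the Hamiltonian derivation $X_{\qx_i} = \pi_X(\qx_i,\cdot)$ on $A(X)$ must preserve the zero ideal of $A(X)$, which pulled back to $\CC[x_1,\ldots,x_n]$ says precisely that $\pi(x_i,g_k)\equiv 0$ modulo $I$; the choice of representatives $x_{ij}$ does not affect this because any two choices differ by elements of $I$. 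Then \eqref{eqn:jac} follows similarly: the Jacobiator $\mathrm{Jac}_\pi(f,g,h)$ is, for fixed $g,h$, a derivation in $f$, and again reduces to checking on generators; modulo $I$ it equals the Jacobiator of $\pi_X$ on $A(X)$, which vanishes since $\pi_X$ satisfies the Jacobi identity. One must be a little careful that $\pi(g,h)$ itself need not lie in $I$ even when one of $g,h$ does — but \eqref{eqn:ham} handles exactly that subtlety, ensuring the nested brackets land where they should modulo $I$.

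For the converse — that any extension comes from such a choice — suppose $\pi'$ is any bi-derivation on $\CC^n$ that reduces to $\pi_X$ on $A(X)$. Then $\pi'(x_i,x_j)$ is \emph{some} polynomial representing $\qx_{ij} = \pi_X(\qx_i,\qx_j)\in A(X)$, so taking $x_{ij} := \pi'(x_i,x_j)$ exhibits $\pi'$ as the bi-derivation produced by the construction, since a bi-derivation on a polynomial ring is determined by its values on generators. I expect the main obstacle, such as it is, to be purely bookkeeping: carefully phrasing the reduction-to-generators argument so that the Leibniz rule is invoked correctly in each slot, and making explicit that "reduces mod $I$ to $\pi_X$" is exactly the statement needed to pass from $\pi_X$'s defining properties on $A(X)$ to \eqref{eqn:ham} and \eqref{eqn:jac} on $\CC[x_1,\ldots,x_n]$. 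Since this is a standard proposition recorded for reference (and attributed to \cite{Laurent13}), I would keep the write-up brief, emphasizing only these reduction steps.
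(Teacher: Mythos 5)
Your proposal is correct and follows essentially the standard argument that the paper itself omits (it defers to \cite{Laurent13}): define $\pi$ by its values on generators via the explicit bivector formula, reduce the verifications of \eqref{eqn:ham} and \eqref{eqn:jac} to generators of $I$ using the Leibniz rule, and obtain the converse by taking $x_{ij}:=\pi'(x_i,x_j)$. The only tightening I would suggest is to state once, explicitly, the compatibility identity $p(\pi(f,g))=\pi_X(p(f),p(g))$ for the projection $p:\CC[x_1,\ldots,x_n]\to A(X)$ (both sides are skew bi-derivations along $p$ agreeing on the generators $x_i$), since \eqref{eqn:ham} then follows by applying it once and \eqref{eqn:jac} by applying it twice to the nested brackets, which is exactly the ``subtlety'' you gesture at in your second paragraph.
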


To ensure that such an extension defines a Poisson bracket, one must try to refine the choice of representatives so that equation \ref{eqn:jac} is identically zero.  Due to bilinearity and the Leibniz rule, finding an extension is therefore equivalent to choosing representatives such that the associated bracket $\pi$ satisfies the Jacobi identity on the generators:
$$\pi(x_i,\pi(x_j,x_k))+\pi(x_j,\pi(x_k,x_i))+\pi(x_k,\pi(x_i,x_j))=0 \mbox{ for all } i,j,k=1...n$$

There are two general cases $X=V(I)\subseteq\CC^n$ in which solving this Jacobi condition (for a choice of representatives) is particularly interesting.

{\it Case 1}:  When $n=3,4$, the Jacobi identity condition is not over-determined.  If $n=3$, the Jacobi identity is a single equation in three unknowns.  If $n=4$, the Jacobi condition consists of four equations in four unknowns.  For $n>4$, the condition becomes (increasingly) overdetermined.

{\it Case 2}: When the number of generators of $I$ is small (roughly: ${\rm codim}(X)$ is small, at least when $X$ not far from being a complete intersection), the Jacobi identity can only fail in a very coherent way.  In this case, one is more likely to determine the solvability of the Jacobi condition.  

The case where $X$ is a hypersurface in $\CC^3$ and $\CC^4$ is considered in this note.


\section{The Complex of Multi-Derivations $\XX^\bullet$}

One can rephrase much of the theory of the Poisson geometry of affine varieties in terms of the complex of multi-derivations.  Specifically, the description in terms of multi-derivations will allow us to apply the well-developed theory of Koszul complexes to the problem of finding extensions to Poisson algebras.  As before, we refer the reader to \cite{Laurent13} for details.

As in section \ref{sec:basics}, $(X,\pi)$ is an affine Poisson variety with coordinate ring $A(X)=A$.  We denote the vector space of $k$-derivations of $A$ by $$\XX^k_A:={\rm Hom}(\wedge^k A,\CC).$$  These vector spaces form the Lichnerowicz-Poisson complex:

\[\xymatrix{
 \ldots \ar[r]^\dpi  & \XX_A^3 \ar[r]^\dpi & \XX_A^{2} \ar[r]^\dpi & \XX_A^{1} \ar[r]^\dpi & \XX_A^0  \ar[r] & 0. \\
}\]

The differential is given by $\dpi A = -[A,\pi]$, where  $A\in\XX^k_A$ and $[\cdot,\cdot]$ is the Schouten bracket.  That $\dpi$ is a differential follows easily from the fact that $\pi$ is a Poisson bracket if and only if $[\pi,\pi]=0$.

\begin{remark}
In the case of $X=\CC^n$, we will simply drop the $A$ and denote the complex of multi-derivations $\XX^\bullet$. 
\end{remark}

\subsection{Duality with the sheaf of differentials}

Denote the usual complex of K\"{a}hler forms of $A(X)=A$ by $(\Omega_A^\bullet,d)$.  As with multi-derivations, when $X=\CC^n$, we will simply write $(\Omega^\bullet,d)$.  There is a natural  $A$-linear pairing $\langle\cdot,\cdot\rangle:\; \XX^k_A \otimes_A \Omega^k_A \longrightarrow A$ given by 
$$\langle P, gdf_1\wedge\ldots\wedge df_k \rangle := g P(f_1,\ldots,f_k)$$

This pairing gives rise to two sets of $A$-linear maps:
$$\begin{array}{ll}
 \XX^k \to {\rm Hom}(\Omega^k_A,A) & \mbox{ is an isomorphism, while } \\
 \Omega^k \to {\rm Hom}(\XX^k_A,A)& \mbox{ is neither injective nor surjective in general.}
 \end{array}$$

In the case $X=\CC^n$, both maps are isomorphisms.  We make use of this pairing via the volume form $\nu:=dx_1\wedge\ldots\wedge dx_n$ to obtain an isomorphism $$ \nu^\flat : \XX^k \to \Omega^{n-k}$$
defined via contraction with the volume form: $$\langle \nu^\flat(A),B\rangle = \langle \nu, A\wedge B\rangle,$$ for all $A\in\XX^k$ and $B\in\XX^{n-k}$.  This isomorphism allows us to transport the usual differential on $(\Omega^\bullet,d)$ to $\XX^\bullet$.  That is, we have the following diagram:
\[\xymatrix{
(\Omega^\bullet,d): & 0 \ar[r] & \Omega^0\ar[r]^{d} & \Omega^1 \ar[r]^{d} & \ldots \ar[r]^{d} & \Omega^{n-1} \ar[r]^{d} & \Omega^n  \ar[r] & 0 \\
(\XX^\bullet,D_\nu): &  0 \ar[r] & \XX^n\ar[r]^{D_\nu} \ar[u]^{\nu^\flat} & \XX^{n-1} \ar[r]^{D_\nu} \ar[u]^{\nu^\flat} & \ldots \ar[r]^{D_\nu} \ar[u]^{\nu^\flat} & \XX^{1} \ar[r]^{D_\nu} \ar[u]^{\nu^\flat} & \XX^0  \ar[r]  \ar[u]^{\nu^\flat} & 0, \\
}\]
Where $D_\nu := (\nu^{\flat})^{-1}\circ d \circ \nu^\flat$ is the {\it divergence operator} with respect to $\nu$.

\subsection{The Koszul complex}

Now, we will specialize to the case where $X$ is an affine Poisson hypersurface $X=V(\phi)\subseteq\CC^n$ that has an isolated singularity at the origin.   The Koszul complex associated to the jacobian ideal $J_\phi=(\frac{\partial\phi}{\partial x_1},\ldots,\frac{\partial\phi}{\partial x_n})$ is exact when $k\neq n$ and takes the well-known form:
\[\xymatrix{
(\Omega^\bullet,\uphi): & 0 \ar[r] & \Omega^0\ar[r]^{\uphi} & \Omega^1 \ar[r]^{\uphi} & \ldots \ar[r]^{\uphi} & \Omega^{n-1} \ar[r]^{\uphi} & \Omega^n  \ar[r] & 0, \\
}\]
where the differential is defined by $\uphi\alpha = \alpha\wedge d\phi$, for $\alpha\in\Omega^k$.   

Next, we will use the isomorphism between $\Omega^\bullet$ and $\XX^\bullet$ induced by the volume form $\nu$ to realize the Koszul complex on $\XX^\bullet$:

\begin{proposition}  Let $(\Omega^\bullet,\uphi)$ be the Koszul complex as described above.  Then $$F:\;(\XX^\bullet,\dphi)\to(\Omega^\bullet,\uphi)$$ is an isomorphism, where $F(A)=(-1)^{n+1}\nu^\flat(A)$ for $A\in\XX^k$ and
\[\xymatrix{
(\XX^\bullet,\dphi): & 0 \ar[r] & \XX^n \ar[r]^\dphi & \XX^{n-1} \ar[r]^\dphi & \ldots \ar[r]^\dphi & \XX^{1} \ar[r]^\dphi & \XX^0  \ar[r] & 0 \\
}\]
has $A$-linear differential  $\dphi A=\langle  d\phi,A\rangle$ for $A\in\XX^k$.
\end{proposition}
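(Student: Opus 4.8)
The plan is to show that $F = (-1)^{n+1}\nu^\flat$ intertwines the differentials $\dphi$ and $\uphi$, i.e. that the diagram commutes: $F\circ\dphi = \uphi\circ F$. Since $\nu^\flat:\XX^k\to\Omega^{n-k}$ is already known to be an $A$-linear isomorphism, this immediately upgrades $F$ to an isomorphism of complexes, and the exactness statement (away from $k=n$) is inherited from the classical exactness of the Koszul complex $(\Omega^\bullet,\uphi)$. So the entire content is the commutativity of the square, which reduces to a computation on $k$-derivations.

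First I would record the defining relations precisely: $\uphi\alpha = \alpha\wedge d\phi$ for $\alpha\in\Omega^{k}$, and $\dphi A = \langle d\phi, A\rangle$ for $A\in\XX^{k}$ — here $\langle d\phi, A\rangle$ denotes the interior contraction of the $1$-form $d\phi$ into the $k$-derivation $A$, producing a $(k-1)$-derivation. The key algebraic identity is the compatibility of contraction with the volume-form flat: for $A\in\XX^k$ and a $1$-form $\eta$,
$$
\nu^\flat\bigl(\iota_\eta A\bigr) = (-1)^{?}\,\nu^\flat(A)\wedge\eta,
$$
with the sign determined by moving $\eta$ past the $(n-k)$-form $\nu^\flat(A)$, i.e. past degree $n-k$. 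I would verify this by pairing both sides against an arbitrary $B\in\XX^{n-k+1}$ and using the definition $\langle\nu^\flat(C),B\rangle = \langle\nu, C\wedge B\rangle$ together with the adjunction between $\iota_\eta$ on multi-derivations and $\eta\wedge(-)$ on forms (standard: $\langle \iota_\eta A, B\rangle$ relates to $\langle A, \eta\wedge B\rangle$ up to sign). Tracking the signs through $\nu^\flat(\iota_{d\phi}A)$ versus $\nu^\flat(A)\wedge d\phi = \uphi(\nu^\flat A)$, the prefactor $(-1)^{n+1}$ in the definition of $F$ is exactly what is needed to make the square commute on the nose; this is where I would be most careful.

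Once the identity $\nu^\flat(\iota_{d\phi}A) = \pm\,\uphi(\nu^\flat A)$ is established with the sign matching the normalization in $F$, the conclusion is formal: $F$ is degree-reversing-compatible, bijective in each degree, and commutes with the differentials, hence an isomorphism of cochain complexes. I would then remark that, under $F$, the classical fact that the Koszul complex of $J_\phi$ is exact in all degrees $k\neq n$ (equivalently, $\tfrac{\partial\phi}{\partial x_1},\ldots,\tfrac{\partial\phi}{\partial x_n}$ form a regular sequence, which holds precisely because $V(\phi)$ has an isolated singularity, so $J_\phi$ is $\mathfrak{m}$-primary and of height $n$) transports to exactness of $(\XX^\bullet,\dphi)$ in those degrees. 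The only genuine obstacle is bookkeeping the signs coming from the two conventions (contraction ordering and the degree-reversal in $\nu^\flat$); everything else is either a cited classical fact or a direct unwinding of definitions.
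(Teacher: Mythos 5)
Your route is sound and genuinely different from the paper's. The paper proves the intertwining relation $\nu^\flat\circ\dphi=(-1)^{n+1}\,\uphi\circ\nu^\flat$ by a direct multi-index computation in coordinates: it writes $A=A_J\,\partial/\partial x^J$, expands $\dphi A$, and compares $dx^{\hat{J_j}}$ with $dx^{\hat J}\wedge dx^j$, with the degree-dependence of the shuffle signs cancelling along the way. You instead work coordinate-freely through the duality pairing, testing $\nu^\flat(\iota_{d\phi}A)$ against an arbitrary $B\in\XX^{n-k+1}$. The one thing your write-up leaves undone is the only substantive point of the proposition: you record the contraction identity with an undetermined sign $(-1)^{?}$ and then assert that the prefactor $(-1)^{n+1}$ is exactly what is needed, which as written assumes the conclusion rather than establishing it. The gap does close, and your setup makes it quick: since $A\wedge B\in\XX^{n+1}=\{0\}$, the anti-derivation property of $\iota_{d\phi}$ gives $(\iota_{d\phi}A)\wedge B=(-1)^{k+1}A\wedge\iota_{d\phi}B$, hence $\langle\nu^\flat(\dphi A),B\rangle=(-1)^{k+1}\langle\nu^\flat(A),\iota_{d\phi}B\rangle$ (with contraction in the first argument and the determinant convention for the pairing); and comparing $\langle\nu^\flat(A)\wedge d\phi,B\rangle$ with $\langle\nu^\flat(A),\iota_{d\phi}B\rangle$ costs a further $(-1)^{n-k}$ from moving $\phi$ across the remaining $n-k$ arguments. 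The product $(-1)^{k+1}(-1)^{n-k}=(-1)^{n+1}$ is independent of $k$, which is precisely why a single constant prefactor in $F$ can make every square commute --- a point your argument makes transparent and which is only implicit in the paper's computation. Once that sign verification is actually carried out, the rest of your proposal (bijectivity of $\nu^\flat$ in each degree, hence an isomorphism of complexes, and transport of exactness of the Koszul complex of the regular sequence $\partial\phi/\partial x_1,\ldots,\partial\phi/\partial x_n$) is formal, exactly as you say and exactly as the paper uses the proposition afterwards.
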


\begin{proof}  This follows from a straight-forward computation in local coordinates.  Let $J=(i_1,\ldots,i_k)$ and define the two following multi-indices:
$$ \hat{J} = (1,\ldots,\hat{i_1},\ldots,\hat{i_k},\ldots,n), $$
that is $\hat{J}$ is the complement of $J$ in $(1,2,\ldots,n)$, preserving the order.
$$ J_k = (i_1,\ldots,\hat{k},\ldots,i_k),$$
that is $J_k$ is $J$ with the element $k$ removed.  Notice that $\hat{J_i}$ and $\hat{J}\cup\{i\}$ have the same indices; their order is related by a permutation of odd order if $n$ is even and of even order if $n$ is odd.

One must check that, $(\nu^\flat\circ\dphi )(A)=(-1)^{n+1}( \uphi\circ\nu^\flat)(A)$ for $A = A_J\frac{\partial}{\partial x^J}\in\XX^k$.

\begin{eqnarray*}
(\nu^\flat\circ\dphi )(A) &=& \nu^\flat\left(\dphi \left(A_J\frac{\partial}{\partial x^J}\right)\right) \\ 
			&=& \nu^\flat\left( \sum_{j\in J}A_J \frac{\partial\phi}{\partial x_j} \frac{\partial}{\partial x^{J_j}} \right) \\
			&=& \sum_{j\in J}A_J \frac{\partial\phi}{\partial x_j} d x^{\hat{J_j}} \\
			&=& (-1)^{n+1}\sum_{j\in J}A_J \frac{\partial\phi}{\partial x_j} dx^{\hat{J}} \wedge dx^{j} \\
			&=& (-1)^{n+1} \left(\sum_{j\in J}A_J dx^{\hat{J}} \right)\wedge d\phi \\
			&=& (-1)^{n+1}( \uphi\circ\nu^\flat)(A)
\end{eqnarray*}
\end{proof}

Thus, it follows that, when $V(\phi)$ has only isolated singularities, the complex $(\XX^\bullet,\dphi)$ is exact for $k\neq 0$.


\section{Extension of Poisson Hypersurfaces}

In this section, we will continue to consider the case where $X$ is an affine Poisson hypersurface $X=V(\phi)\subseteq\CC^n$ that has an isolated singularity at the origin.  As described in proposition \ref{prop:ext}, we will first concentrate on extending  $\pi_X$ to $\CC^n$ as a bi-derivation, then focus on the Jacobi condition.

Suppose that $\beta\in\XX^2$ is an extension of the Poisson bracket $\pi_X$ to a (not necessarily Poisson) bi-derivation on $\CC^n$.  As noted in section \ref{sec:basics}, such an extension always exists.  By proposition \ref{prop:ext}, it follows that  $\dphi\beta = \langle d\phi,\beta\rangle=\beta(\phi,\cdot)$ vanishes on $V(\phi)$.  That is, we have
$$\dphi\beta = \phi X_1, \mbox{ for some } X_1\in\XX^1.$$
Applying $\dphi$ to both sides of this equality gives:
$$0 = \dphi^2\beta = \dphi(\phi X_1) = \phi \dphi X_1. $$
Thus $\dphi X_1 = 0$ and $X_1$ is a cocycle.  Since $H_1(\XX^\bullet,\dphi)=0$, it follows that there exists an $X_2\in\XX^2$ such that $X_1=\dphi X_2$ and our original condition becomes $$\dphi\beta=\phi X_1 = \phi\dphi X_2 = \dphi(\phi X_2).$$

Therefore, we obtain the condition $\dphi(\beta-\phi X_2) =0$.  Exactness of $(\XX^\bullet,\dphi)$ at $k=2$ then gives the existence of an $X_3\in\XX^3$ such that $\dphi X_3 = \beta - \phi X_2$.  This implies:

\begin{proposition}
\label{prop:lem}
Suppose $\beta\in \XX^2$ is an extension of $(X=V(\phi),\pi_X)$ to $\CC^n$, where $V(\phi)$ has only isolated singularities.  Then there exists $X_2\in\XX^2$ and $X_3\in\XX^3$ such that  $$\beta = \dphi X_3 +\phi X_2.$$
\end{proposition}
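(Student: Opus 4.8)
The plan is to run a short ``bootstrapping'' argument down the complex $(\XX^\bullet,\dphi)$, using nothing but its exactness in positive degrees (which holds precisely because $V(\phi)$ has isolated singularities, as established in the Koszul‑complex discussion) together with the single constraint on $\beta$ coming from the fact that it restricts to an honest Poisson bracket on $X$. The only inputs I would invoke are: equation \ref{eqn:ham} of Proposition \ref{prop:ext}, which says $\beta(\phi,\cdot)$ lands in the ideal $I=(\phi)$; and the vanishing $H_k(\XX^\bullet,\dphi)=0$ for $k\geq 1$.

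First I would translate the constraint into multi‑derivation language: since $\dphi\beta=\langle d\phi,\beta\rangle=\beta(\phi,\cdot)$ and $I=(\phi)$ because $X$ is a hypersurface, every value of $\dphi\beta$ lies in $(\phi)$, and because derivations of $\CC[x_1,\ldots,x_n]$ are free this means $\dphi\beta=\phi X_1$ for some $X_1\in\XX^1$. Next I would apply $\dphi$ a second time, using that $\dphi$ is $\CC[x_1,\ldots,x_n]$‑linear (it is contraction against $d\phi$) and $\dphi^2=0$: this gives $0=\dphi(\phi X_1)=\phi\,\dphi X_1$, and since $\phi$ is a nonzero element of the integral domain $\CC[x_1,\ldots,x_n]$ it is a non‑zero‑divisor, so $\dphi X_1=0$. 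Thus $X_1$ is a $\dphi$‑cocycle, and exactness in degree $1$ produces $X_2\in\XX^2$ with $X_1=\dphi X_2$. Substituting back, $\dphi\beta=\phi\,\dphi X_2=\dphi(\phi X_2)$, i.e. $\dphi(\beta-\phi X_2)=0$. Applying exactness once more, now in degree $2$, to the cocycle $\beta-\phi X_2$ yields $X_3\in\XX^3$ with $\dphi X_3=\beta-\phi X_2$, which is exactly the claimed decomposition $\beta=\dphi X_3+\phi X_2$.

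I do not expect a genuine obstacle here: the argument is a routine descent through an exact complex, and the one substantive hypothesis being used — exactness of $(\XX^\bullet,\dphi)$ away from degree $0$ — is already in hand, since ``isolated singularities'' is exactly what collapses the relevant homology of the Koszul complex on $J_\phi$. The only thing to watch is bookkeeping: the degree‑$2$ exactness step needs $\XX^3\neq 0$, i.e. $n\geq 3$, which is fine for the cases of interest $n=3,4$. I would also observe, for later use, that the decomposition is far from unique — $X_3$ may be altered by any $\dphi$‑cocycle in $\XX^3$, and since $\phi\,\XX^2$ vanishes on $V(\phi)$, the bi‑derivation $\dphi X_3+\phi\tilde X_2$ agrees with $\beta$ modulo $(\phi)$ and so restricts to $\pi_X$ for \emph{any} $\tilde X_2\in\XX^2$; this flexibility is what the ``furthermore'' clause of Theorem \ref{thm:main} records and what one ultimately exploits to attack the Jacobi identity.
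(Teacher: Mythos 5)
Your argument is exactly the paper's proof: translate $\beta(\phi,\cdot)\in(\phi)$ into $\dphi\beta=\phi X_1$, apply $\dphi$ again and cancel the non-zero-divisor $\phi$ to see $X_1$ is a cocycle, then use exactness of $(\XX^\bullet,\dphi)$ twice to produce $X_2$ and $X_3$. The proposal is correct and matches the paper's reasoning step for step, with a bit more explicit bookkeeping about why $\phi$ can be cancelled.
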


\begin{remark} A few remarks that follow easily from the reasoning above:
\begin{enumerate}
\item $\beta = \dphi X_3 +\phi X_2$ is an extension of $\pi_X$ for {\it any} choice of $X_2$.  It follows that $\dphi X_3$ is an extension of $\pi_X$; the hamiltonian derivation of $\phi$ with respect to this extension is identically $0$ (in general, it is only $I$-valued).
\item Unlike $X_2$, one does not get to choose $X_3$; it is determined by the bracket $\pi_X$ on $X=V(\phi)$.
\end{enumerate}
\end{remark}

We can further refine our description of $X_3$ using the condition imposed on $X_3$ by the Jacobi identity.  By proposition \ref{prop:ext}, condition \ref{eqn:jac}, we know that the 3-derivation $[\beta,\beta]$ vanishes is $\langle \phi \rangle$-valued.   Using elementary properties of the Schouten bracket, $[\beta,\beta]$ becomes:
\begin{eqnarray*}  [\beta,\beta]  &=& [\dphi X_3,\dphi X_3] + 2[ \dphi X_3, \phi X_2 ] + [\phi X_2,\phi X_2] \\
						&=& -\dphi([X_3,\dphi X_3]) + 2\phi[\dphi X_3,X_2] + 2\phi(\dphi X_2)\wedge X_2 +\phi^2[X_2,X_2] \\
						&=& -\dphi([X_3,\dphi X_3]) + \phi\left(2[\dphi X_3,X_2] + 2(\dphi X_2)\wedge X_2 +\phi[X_2,X_2]\right).
						\end{eqnarray*}
As the second term is $\langle \phi \rangle$-valued for every choice of $X_2$, it follows that $\dphi([X_3,\dphi X_3])$ must also be $\langle \phi \rangle$-valued.  Thus, we have the following theorem:

\begin{theorem}
\label{thm:main}  
Suppose $\beta\in \XX^2$ is an extension of $(X,\pi_X)$ to $\CC^n$ as a bi-derivation, where $X=V(\phi)$ has only isolated singularities.  Then there exists $X_2\in\XX^2$ and $X_3\in\XX^3$ such that $$\beta = \dphi X_3 +\phi X_2,$$  where $X_3$ satisfies the condition
$$[X_3,\dphi X_3] = \dphi Y_5 +\phi Y_4, \mbox{ where }  Y_4\in\XX^4 \mbox{ and } Y_5\in\XX^5.$$
Furthermore, $\dphi X_3 +\phi \tilde{X}_2$ is also an extension of $\pi_X$ for any choice of $\tilde{X}_2\in\XX^2$.
\end{theorem}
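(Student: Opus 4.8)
The plan is to read off Theorem \ref{thm:main} from Proposition \ref{prop:lem} together with the Schouten-bracket expansion of $[\beta,\beta]$ already displayed above, the single new ingredient being a two-step descent through the exact complex $(\XX^\bullet,\dphi)$. Two of the three assertions require nothing further. The decomposition $\beta = \dphi X_3 + \phi X_2$ is precisely Proposition \ref{prop:lem}. For the closing sentence, note that for any $\tilde X_2 \in \XX^2$ one has $\dphi X_3 + \phi\tilde X_2 = \beta + \phi(\tilde X_2 - X_2)$; the bi-derivation $\phi(\tilde X_2 - X_2)$ is $\langle\phi\rangle$-valued, hence $I$-valued since $\phi \in I$, so it is killed by passage to $A(X)$, and $\dphi X_3 + \phi\tilde X_2$ induces the same bracket $\pi_X$ on $A(X)$ that $\beta$ does.

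The content is the condition on $X_3$. I would begin from the fact that, by Proposition \ref{prop:ext} (equation \ref{eqn:jac}), the $3$-derivation $[\beta,\beta]$ is $\langle\phi\rangle$-valued, and compare this with the expansion
$$[\beta,\beta] = -\dphi\bigl([X_3,\dphi X_3]\bigr) + \phi\bigl(2[\dphi X_3,X_2] + 2(\dphi X_2)\wedge X_2 + \phi[X_2,X_2]\bigr).$$
The second summand is $\langle\phi\rangle$-valued for every choice of $X_2$, so $\dphi([X_3,\dphi X_3]) = \phi Z$ for some $Z \in \XX^3$. Now I run the standard cocycle-lifting argument twice. Applying $\dphi$ and using $\dphi^2 = 0$ together with the $A$-linearity of $\dphi$ gives $0 = \phi\,\dphi Z$, hence $\dphi Z = 0$; since $V(\phi)$ has only isolated singularities, $(\XX^\bullet,\dphi)$ is exact in every positive degree, so $Z = \dphi Y_4$ for some $Y_4 \in \XX^4$. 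Then $\dphi([X_3,\dphi X_3]) = \phi\,\dphi Y_4 = \dphi(\phi Y_4)$, so $[X_3,\dphi X_3] - \phi Y_4$ is a $\dphi$-cocycle in $\XX^4$; exactness in degree $4$ gives $Y_5 \in \XX^5$ with $\dphi Y_5 = [X_3,\dphi X_3] - \phi Y_4$, which is the asserted identity $[X_3,\dphi X_3] = \dphi Y_5 + \phi Y_4$.

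I do not expect a real obstacle: the one nontrivial computation, the expansion of $[\beta,\beta]$, is already in hand, and the rest is a routine chase in an exact complex. The two points wanting care are the sign conventions in that expansion (the identity $[\dphi X_3,\dphi X_3] = -\dphi[X_3,\dphi X_3]$ is an instance of the graded Jacobi identity, using that $\dphi$ is, up to sign, the operation of bracketing with the function $\phi$, so that the term $[X_3,\dphi^2 X_3]$ drops out; the remaining terms come from the Leibniz rule for $[\,\cdot\,,\phi X_2]$ and the $A$-linearity $\dphi(\phi X_2) = \phi\,\dphi X_2$), and keeping each lifting step in a degree where the Koszul complex is exact --- here degrees $3$ and $4$, both positive, so exactness holds unconditionally. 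When $n = 3$ one has $\XX^4 = \XX^5 = 0$ and the identity degenerates to $[X_3,\dphi X_3] = 0$, consistent with $[X_3,\dphi X_3] \in \XX^4 = 0$ on degree grounds.
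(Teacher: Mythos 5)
Your proposal is correct and follows the paper's own route: the decomposition comes from Proposition \ref{prop:lem}, the $\langle\phi\rangle$-valuedness of $\dphi([X_3,\dphi X_3])$ comes from the displayed expansion of $[\beta,\beta]$ together with condition (\ref{eqn:jac}), and the two-step lifting through the exact complex $(\XX^\bullet,\dphi)$ is exactly the ``argue as in Proposition \ref{prop:lem}'' step the paper invokes. Your justification of the final claim (that replacing $X_2$ by any $\tilde{X}_2$ changes $\beta$ only by an $I$-valued bi-derivation) matches the paper's remark as well.
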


\begin{remark} The condition $[X_3,\dphi X_3] = \dphi Y_5 +\phi Y_4$ is proved analogously to the proof of proposition \ref{prop:lem}.
\end{remark}

\begin{remark} Therefore, finding an extension of $\pi_X$ to $\CC^n$ as a Poisson bracket is equivalent to choosing $X_2\in\XX^2$ such that $$[ \dphi X_3 +\phi X_2, \dphi X_3 +\phi X_2]=0.$$
\end{remark}

\subsection{The case $n=3$}

A direct consequence of theorem \ref{thm:main} is that all Poisson structures on $X=V(\phi)\subseteq\CC^n$ in the $n=3$ case extend to Poisson structures on $\CC^3$.  The proof of the theorem relies on the low-dimensional nature of $\CC^3$.

\begin{theorem}
\label{thm:dim3}
Suppose $(X=V(\phi),\pi_X)\subseteq\CC^3$ is a 2-dimensional Poisson variety with only isolated singularities.  Then, the Poisson bracket $\pi_X$ extends to a Poisson bracket $\beta$ on $\CC^3$.  Furthermore, $\beta$ has the form:
$$\beta = f\frac{\partial \phi}{\partial x_3}\frac{\partial}{\partial x_1}\wedge\frac{\partial}{\partial x_2} +
		f\frac{\partial \phi}{\partial x_2}\frac{\partial}{\partial x_3}\wedge\frac{\partial}{\partial x_1}+
		f\frac{\partial \phi}{\partial x_1}\frac{\partial}{\partial x_2}\wedge\frac{\partial}{\partial x_3}$$
where $f\in\CC[x_1,x_2,x_3]$.
\end{theorem}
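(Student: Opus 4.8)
The plan is to apply Theorem~\ref{thm:main} and exploit the fact that in dimension $n=3$ the space $\XX^3$ is one-dimensional over $A=\CC[x_1,x_2,x_3]$, generated by the tri-vector $\partial_{x_1}\wedge\partial_{x_2}\wedge\partial_{x_3}$. First I would write $X_3 = f\,\partial_{x_1}\wedge\partial_{x_2}\wedge\partial_{x_3}$ for some polynomial $f$, and compute $\dphi X_3 = \langle d\phi, X_3\rangle$ directly in coordinates: contracting $d\phi = \sum_i \tfrac{\partial\phi}{\partial x_i}dx_i$ against $f\,\partial_{x_1}\wedge\partial_{x_2}\wedge\partial_{x_3}$ produces exactly the bi-vector displayed in the statement. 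So the ``furthermore'' clause about the form of $\beta$ is immediate once we know $\beta = \dphi X_3$ can be taken as the extension (with $X_2 = 0$).

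The real content is therefore showing that $\dphi X_3$ already satisfies the Jacobi identity, i.e. that we may take $\tilde{X}_2 = 0$. By the remark following Theorem~\ref{thm:main}, $\dphi X_3$ is an extension of $\pi_X$ as a bi-derivation for any $X_3$ obtained from the construction, so it suffices to check $[\dphi X_3, \dphi X_3] = 0$ on the nose. From the Schouten-bracket computation already carried out in the text, $[\dphi X_3, \dphi X_3] = -\dphi([X_3, \dphi X_3])$. Now $[X_3,\dphi X_3]$ lives in $\XX^{3}\wedge\XX^{2}$-degree, i.e. in $\XX^{4}$ over a $3$-dimensional space — but $\XX^k = 0$ for $k > n = 3$. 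Hence $[X_3,\dphi X_3] = 0$ identically, and therefore $[\dphi X_3,\dphi X_3] = 0$. This is precisely the ``low-dimensional nature of $\CC^3$'' the preamble alludes to: the Jacobiator obstruction lives in a multi-derivation degree that simply does not exist.

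Putting this together: given $\pi_X$, Proposition~\ref{prop:lem} yields $X_2\in\XX^2$ and $X_3\in\XX^3$ with $\beta_0 := \dphi X_3 + \phi X_2$ an extension as a bi-derivation; by the remark, $\beta := \dphi X_3$ (replacing $\tilde X_2$ by $0$) is again such an extension; and by the degree-counting argument $[\beta,\beta] = -\dphi([X_3,\dphi X_3]) = 0$, so $\beta$ is a genuine Poisson extension. Finally write $X_3 = f\,\partial_{x_1}\wedge\partial_{x_2}\wedge\partial_{x_3}$ and read off the claimed coordinate expression for $\beta = \dphi X_3$.

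**Main obstacle.** There is essentially no obstacle in dimension three — that is the point of the theorem. The only step requiring care is the sign/index bookkeeping in identifying $\dphi(f\,\partial_{x_1}\wedge\partial_{x_2}\wedge\partial_{x_3})$ with the displayed bi-vector (including checking that the single polynomial $f$ really appears with the same coefficient, up to sign, in all three terms, as the formula $\dphi A = \langle d\phi, A\rangle$ forces), together with the observation that the natural vanishing of $\XX^{\ge 4}$ kills the Jacobiator. Both are routine; the conceptual work was done in Theorem~\ref{thm:main}.
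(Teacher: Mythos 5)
Your proposal is correct and follows essentially the same route as the paper's proof: invoke Theorem \ref{thm:main} to reduce to $\beta = \dphi X_3$, kill the Jacobiator via $[X_3,\dphi X_3]\in\XX^4=\{0\}$, and read off the coordinate form from $X_3 = f\,\partial_{x_1}\wedge\partial_{x_2}\wedge\partial_{x_3}$. The sign ambiguity in $[\dphi X_3,\dphi X_3]=\pm\dphi([X_3,\dphi X_3])$ is immaterial since the bracket vanishes identically.
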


\begin{proof} By proposition \ref{prop:ext}, an extension of $\pi_X$ to a bi-derivation $\beta$ on $\CC^3$ always exists. Theorem \ref{thm:main} then implies that we may assume $\beta=\dphi X_3$ for some $X_3\in\XX^3$.  However, this choice of $\beta$ is actually a Poisson bracket:
$$[\beta,\beta]=[\dphi X_3,\dphi X_3]=\dphi( [X_3,\dphi X_3]) = 0,$$
where the last equality holds because $[X_3,\dphi X_3]\in\XX^4=\{0\}$, as the space of 4-derivations is zero in dimension 3.  Since $X_3$ is a top-dimensional multi-derivation, 
$$X_3=f\frac{\partial}{\partial x_1}\wedge\frac{\partial}{\partial x_2}\wedge\frac{\partial}{\partial x_3}$$ for some $f\in\CC[x_1,x_2,x_3]$.  A direct computation using the definition of $\dphi$ yields:
$$\beta=\dphi X_3 =  f\frac{\partial \phi}{\partial x_3}\frac{\partial}{\partial x_1}\wedge\frac{\partial}{\partial x_2} +
		f\frac{\partial \phi}{\partial x_2}\frac{\partial}{\partial x_3}\wedge\frac{\partial}{\partial x_1}+
		f\frac{\partial \phi}{\partial x_1}\frac{\partial}{\partial x_2}\wedge\frac{\partial}{\partial x_3}$$
as desired.
\end{proof}

\begin{remark}
These Poisson structures on $\CC^3$ have been studied extensively in \cite{Pichereau06}.  When $f=1$, they are exact and their Poisson (co)homology and deformations are explicitly computed.  When $f\neq 1$, these cohomology spaces are not as easily computed.  However, as can be easily checked, the space of Casimir functions of $\beta$ is generated by $\phi$.
\end{remark}

\subsection{The case $n=4$}

In the case of $\CC^4$, theorem \ref{thm:main} also benefits from low-dimensional simplifications.   However, unlike the $n=3$ case, it is not clear that any Poisson structure $(V(\phi),\pi_X)$ extends to a Poisson structure on $\CC^4$.

Any extension $\beta = \dphi X_3 + \phi X_2$ of $\pi_X$ satisfies:
$$ [ X_3,\dphi X_3 ] = \dphi Y_5 + \phi Y_4 = \phi Y_4 $$
as $\XX^5 = \{0\}$ in dimension 4.   Thus, $[X_3,\dphi X_3]$ is $\langle\phi\rangle$-valued.  Using the formula (due to Koszul)
$$[A,B] = (-1)^k D_\nu(A\wedge B) - (D_\nu A)\wedge B - (-1)^k A\wedge(D_\nu B)$$  where  $A\in\XX^j, \; B\in\XX^k,$
we see that
$$[X_3,\dphi X_3] = D_\nu(X_3\wedge \dphi X_3) - (D_\nu X_3)\wedge \dphi X_3 - X_3 \wedge ( D_\nu d_\phi X_3).$$
The first term $D_\nu(X_3\wedge \dphi X_3)$ is zero, as $X_3\wedge \dphi X_3\in\XX^5=\{0\}$.  The second two terms are equal:
\begin{eqnarray*} \dphi( X_3\wedge D_\nu X_3) &=& \dphi X_3 \wedge D_\nu X_3 + X_3\wedge \dphi D_\nu X_3   \\
 0 & = & D_\nu X_3 \wedge \dphi X_3 - X_3\wedge D_\nu\dphi X_3
\end{eqnarray*}

The first line above holds because $\dphi$ is a derivation of the wedge product.  The second line holds because \begin{enumerate}
\item $X_3\wedge D_\nu X_3 =0 \in\XX^5=\{0\}$, and
\item $D_\nu\dphi X_3 = -\dphi D_\nu X_3$.
\end{enumerate}

Therefore $[X_3,\dphi X_3] = -2 D_\nu X_3 \wedge \dphi X_3$ is $\langle\phi\rangle$-valued.

\begin{theorem} Suppose $\beta\in \XX^2$ is an extension (as a bi-derivation) of $(X=V(\phi),\pi_X)$ to $\CC^4$, where $V(\phi)$ has only isolated singularities.  Then there exists $X_2\in\XX^2$ and $X_3\in\XX^3$ such that $$\beta = \dphi X_3 +\phi X_2,$$  where $X_3$ satisfies the condition
$$D_\nu X_3 \wedge \dphi X_3 \mbox{ is } \langle \phi \rangle\mbox{-valued}.$$
\end{theorem}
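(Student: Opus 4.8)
The plan is to obtain the statement as a specialization of Theorem~\ref{thm:main} to $n=4$; in fact the computation needed is essentially the one displayed immediately before the statement, and what remains is to package it. Proposition~\ref{prop:ext} supplies an extension of $\pi_X$ to a bi-derivation $\beta\in\XX^2$ on $\CC^4$, and Theorem~\ref{thm:main} then produces $X_2\in\XX^2$ and $X_3\in\XX^3$ with $\beta=\dphi X_3+\phi X_2$ and $[X_3,\dphi X_3]=\dphi Y_5+\phi Y_4$. Since $n=4$ forces $\XX^5=\{0\}$, we have $Y_5=0$, so $[X_3,\dphi X_3]=\phi Y_4$ is already $\langle\phi\rangle$-valued with no further argument. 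It therefore only remains to rewrite this Schouten bracket in terms of the divergence operator $D_\nu$ and read off the conclusion.

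For the rewriting, I would apply Koszul's formula $[A,B]=(-1)^kD_\nu(A\wedge B)-(D_\nu A)\wedge B-(-1)^kA\wedge(D_\nu B)$ with $A=X_3$ and $B=\dphi X_3\in\XX^2$, which gives
$$[X_3,\dphi X_3]=D_\nu(X_3\wedge\dphi X_3)-(D_\nu X_3)\wedge\dphi X_3-X_3\wedge(D_\nu\dphi X_3).$$
The leading term vanishes because $X_3\wedge\dphi X_3\in\XX^5=\{0\}$. To see that the two remaining terms coincide, I would feed $X_3\wedge D_\nu X_3\in\XX^5=\{0\}$ into the fact that $\dphi$ is a derivation of $\wedge$: expanding $0=\dphi(X_3\wedge D_\nu X_3)$, using commutativity of the wedge of the two even-degree multi-derivations $\dphi X_3$ and $D_\nu X_3$, and invoking $D_\nu\dphi X_3=-\dphi D_\nu X_3$ yields $(D_\nu X_3)\wedge\dphi X_3=X_3\wedge(D_\nu\dphi X_3)$. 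Substituting back gives $[X_3,\dphi X_3]=-2\,(D_\nu X_3)\wedge\dphi X_3$.

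Combining the two displays, $-2\,(D_\nu X_3)\wedge\dphi X_3=[X_3,\dphi X_3]=\phi Y_4$, so $D_\nu X_3\wedge\dphi X_3$ is $\langle\phi\rangle$-valued, which is precisely the asserted condition on $X_3$. The one point demanding care is the sign bookkeeping in the Koszul formula and in the identity $D_\nu\dphi=-\dphi D_\nu$: both $D_\nu$ and $\dphi$ are degree $-1$ operators on $\XX^\bullet$, so a misplaced sign would make the two cross terms cancel instead of add, which would trivialize the theorem; once these signs are matched against the conventions under which $D_\nu$, $\dphi$, and the Koszul formula were set up, nothing else is subtle.
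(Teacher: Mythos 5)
Your argument is correct and is essentially identical to the paper's: specialize Theorem \ref{thm:main} to $n=4$ so that $\XX^5=\{0\}$ makes $[X_3,\dphi X_3]=\phi Y_4$, then use the Koszul formula together with the vanishing of the $\XX^5$ terms and $D_\nu\dphi=-\dphi D_\nu$ to identify $[X_3,\dphi X_3]=-2\,D_\nu X_3\wedge\dphi X_3$, whence the stated $\langle\phi\rangle$-valuedness. Your extra attention to the sign conventions in the graded Leibniz rule and the anticommutation of the two degree $-1$ operators is exactly the right point to be careful about, but it does not change the route, which matches the paper's proof step for step.
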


\bibliography{references}

\bibliographystyle{plain}


\end{document}